\title[Strong semisimplicity and tangents]
{Bouligand-Severi $k$-tangents 
and strongly semisimple MV-algebras}
\author[L.M. Cabrer]{Leonardo Manuel Cabrer}
\address[l.cabrer@disia.unifi.it]{
{\sc Leonardo Manuel Cabrer}: Department
 of Computer Science, Statistics and Applications ``Giuseppe Parenti'',
University of Florence,
Viale Morgagni 59 --
I-50134, Florence -
Italy}
\thanks{AMS2010. Primary: 06D35. %MV-algebras
Seondary:
54C40, % Algebraic properties of function spaces
49J52, %non smooth analysis
52B05, %Combinatorial properties
03B50%many valued logic
%15B36%Integer matrix
}
\keywords{MV-algebra, strongly semisimple, ideal,
Bouligand-Severi tangent, rational polyhedron}
\thanks{\noindent This research was supported by a Marie Curie Intra European Fellowship within the 7th European Community Framework Program (ref. 299401-FP7-PEOPLE-2011-IEF)}
\newtheorem{theorem}{Theorem}[section]
\newtheorem{lemma}[theorem]{Lemma}
\theoremstyle{definition}
\newtheorem{definition}[theorem]{Definition}
\newcommand{\restrict}{\,{\mathbin{\vert\mkern-0.3mu\grave{}}}\,}
\DeclareMathOperator{\Rn}{{\mathbb R^{\it n}}}
\DeclareMathOperator{\McNn}{\mathcal M([0,1]^{\it n})}
\DeclareMathOperator{\McN}{\mathcal M}
\DeclareMathOperator{\conv}{\rm conv}
\DeclareMathOperator{\den}{\rm den}
\DeclareMathOperator{\I}{[0,1]}
\DeclareMathOperator{\cube}{[0,1]^{\it n}}
\DeclareMathOperator{\aff}{\rm aff}
\DeclareMathOperator{\relint}{\rm relint}
\DeclareMathOperator{\Zed}{\ensuremath{\mathbb{Z}}}
\begin{document}

  \begin{abstract}
   An algebra $A$  is said to be strongly semisimple if
   every principal congruence of $A$ is
   an intersection of maximal congruences. 
   We give a geometrical characterisation of
   strongly semisimple MV-algebras in terms of
   Bouligand-Severi $k$-tangents. The latter
   are  a $k$-dimensional generalisation       
   of the classical Bouligand-Severi tangents.
\end{abstract}

\maketitle

Each semisimple MV-algebra $A$
 is isomorphic to  a separating MV-algebra of 
 continuous $\I$-valued
 maps defined on a compact Hausdorff space $X$,
 which turns out to be homeomorphic to the maximal 
 spectral space of $A$. 
In the particular case when  $A$ is  $n$-generated,
 it is no loss of generality to assume that
 $X$  is a compact 
  subset of $[0,1]^n$ and $A$
is   isomorphic to the MV-algebra  $\McN(X)$ of McNaughton maps of $[0,1]^n$ restricted to $X$ (see \cite[Thm. 3.6.7]{CiDoMu2000}).

Following Dubuc and Poveda \cite{DuPo2010},
we say that an MV-algebra $A$  is
{\it strongly semisimple}  if all its principal
quotients are semisimple. 
In  \cite{BuMu201X}, Busaniche and Mundici characterise those sets $X\subseteq \I^2$
 having the property that
  the  MV-algebra $\McN(X)$ to be strongly semisimple. 
Their result  (Theorem \ref{Thm:BusMun0} below) 
is formulated in terms of the following classical notion
(see \cite{Bo1930,Se1927,Se1931};
also see  
 \cite[p.16]{BoGrWa2009}, \cite[pp.14 and 133]{Mo2006} 
 for modern
 reformulations):

 \begin{definition}
 \label{definition:severi}
 Let  $\emptyset\not=X\subseteq \Rn$ and
 $x\in \Rn$.
 A {\it Bouligand-Severi tangent  
of $X$ at $x$}  is a unit vector $u \in \Rn$
such that $X$ contains a sequence $x_1,x_2,\ldots$ with the following properties: 
\begin{itemize}
\item[(i)] $x_i\not=x$ for all $i$,
 \item[(ii)] $\lim_{i\to \infty } x_i = x,$ 
\,\,and 
\item[(iii)] $\lim_{i\to \infty } {(x_i-x)}/{||x_i-x||} =u.$
\end{itemize}
   \end{definition}

\begin{theorem}\cite[Thm. 2.4]{BuMu201X} 
\label{Thm:BusMun0}
Let $X\subseteq \I^2$ be a closed set. 
The MV-algebra 
$\McN(X)$ is not strongly semisimple  iff  
there exist a point $x\in X$,  a unit vector
 $u\in\mathbb{R}^2$, and a real number $\lambda>0$ such that
\begin{itemize}
\item[{\rm (i)}] $u$ is a Bouligand-Severi tangent of $X$ at $x$,
\item[{\rm (ii)}] $\conv(x,x+\lambda u)\cap X=\{x\}$, and 
\item[{\rm (iii)}] the coordinates
of $x$ and $x+\lambda u$ are rational.
\end{itemize}
\end{theorem}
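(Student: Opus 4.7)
Write $F, G$ for McNaughton lifts of $f, g \in \McN(X)$. The quotient $\McN(X)/\langle f \rangle$ is semisimple iff every $g$ with $g|_{Z(F) \cap X} = 0$ satisfies $g \le n f$ on $X$ for some $n$, so the failure of strong semisimplicity is exactly the existence of $F, G$ with $G|_{Z(F) \cap X} = 0$ and a sequence $y_i \in X$ along which $G(y_i)/F(y_i) \to \infty$.

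\emph{Sufficiency.} Given $(x, u, \lambda)$ as in (i)-(iii), rationality of the endpoints makes $S = \conv(x, x + \lambda u)$ a rational polyhedron; by standard Z-map constructions $S = Z(F)$ for some $F \in \McN([0,1]^2)$, and by (ii) one has $Z(F) \cap X = \{x\}$. Take $G$ to be a McNaughton tent at $x$ with $G(x) = 0$ and $G(z) \ge c\|z - x\|$ locally for some $c > 0$. Along the tangent sequence $y_i$ from (i), $F(y_i) = \operatorname{dist}(y_i, S) = o(\|y_i - x\|)$ by the tangency relation, whereas $G(y_i) \ge c\|y_i - x\|$; hence $G(y_i)/F(y_i) \to \infty$, and $\McN(X)/\langle f \rangle$ fails to be semisimple.

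\emph{Necessity.} Given such $F, G$ and $\{y_i\}$, pass to a subsequence $y_i \to y \in X$; boundedness of $G$ forces $F(y) = 0$, and hence $G(y) = 0$. Fix a rational triangulation $\Delta$ of $[0,1]^2$ on which $F$ and $G$ are linear, and subsequence again so that all $y_i$ lie in a single open simplex $\sigma$ with $y \in \bar\sigma$; write $F|_\sigma(z) = a \cdot z + c_F$, $G|_\sigma(z) = b \cdot z + c_G$ with $a, b \in \Zed^2$ and $c_F, c_G \in \Zed$. A final subsequence yields $(y_i - y)/\|y_i - y\| \to u$, a Bouligand-Severi tangent of $X$ at $y$, and the blow-up $G(y_i)/F(y_i) \to \infty$ forces $a \cdot u = 0 \ne b \cdot u$. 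Thus $a, b$ are linearly independent nonzero integer vectors, $u \in a^\perp$ is a rational direction, and the $2 \times 2$ integer linear system $a \cdot y = -c_F$, $b \cdot y = -c_G$ pins $y$ down uniquely as a rational point. Setting $x = y$, any $y + tu \in X$ with $t > 0$ small enough that $y + tu \in \sigma$ would satisfy $F(y+tu) = t(a \cdot u) = 0$, forcing $g(y+tu) = 0$ from $g|_{Z(F) \cap X} = 0$, hence $G(y+tu) = t(b \cdot u) = 0$, contradicting $b \cdot u \ne 0$. Therefore $[x, x + \lambda u] \cap X = \{x\}$ for any small rational $\lambda > 0$, and $x + \lambda u$ is rational as required. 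The closest thing to a main obstacle is the bookkeeping when $y$ lies on the boundary of $\sigma$ and the tangent direction $u$ enters an adjacent simplex $\sigma'$: this is handled by refining $\Delta$ so that the rational lines $L_F = \{a \cdot z = -c_F\}$ and $L_G = \{b \cdot z = -c_G\}$ appear as faces of $\Delta$, and then rerunning the same integer-linear-system argument on $\sigma'$ to confirm rationality of $y$ and the ridge condition across the shared face.
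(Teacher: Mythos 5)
First, a point of reference: the paper does not prove this theorem --- it is quoted from Busaniche--Mundici \cite[Thm.\ 2.4]{BuMu201X} and is in fact used as a black box in the proof of the (ii)$\Rightarrow$(i) direction of Theorem \ref{theorem:aereo}. So your argument cannot be matched against an internal proof; judged on its own, your overall strategy is sound. The reduction of non-strong-semisimplicity to the existence of $F,G$ with $G$ vanishing on $ZF\cap X$ and $G/F$ unbounded on $X$ is correct, and the sufficiency direction works: $F(y_i)$ is bounded by a Lipschitz constant times $\mathrm{dist}(y_i,\conv(x,x+\lambda u))=o(\|y_i-x\|)$ (it need not \emph{equal} the distance, but the inequality is all you use), while the tent gives $G(y_i)\geq c\|y_i-x\|$.

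In the necessity direction there are two points that need repair. (1) The claim that the blow-up of $G(y_i)/F(y_i)$ ``forces $a\cdot u=0\neq b\cdot u$'' is only half justified: writing the ratio as $(b\cdot u_i)/(a\cdot u_i)$ with $u_i=(y_i-y)/\|y_i-y\|$ gives $a\cdot u=0$, but unboundedness is perfectly compatible with $b\cdot u=0$ as well (a $0/0$ limit). What saves you is that you are in $\mathbb{R}^2$: if both $a\cdot u=0$ and $b\cdot u=0$ then $a$ and $b$ (nonzero, both lying on the line $u^{\perp}$) are proportional, so $G=tF$ on $\sigma$ and the ratio is constant --- contradiction. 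This step must be spelled out; it is precisely the step that fails for $n\geq 3$ and the reason the paper's generalisation requires $k$-tangents of degree $k>1$. (2) The ``main obstacle'' you flag at the end is handled incorrectly but is in fact illusory. Your proposed fix --- refining $\Delta$ and ``rerunning the integer-linear-system argument on $\sigma'$'' --- is not an argument: no tail of the sequence $y_i$ lies in $\sigma'$, so you have no way to derive $a'\cdot u=0$ for the linear piece of $F$ on $\sigma'$, and without that you cannot conclude that a point of $X$ on the segment inside $\sigma'$ lies in $ZF$. The correct observation is that each $(y_i-y)/\|y_i-y\|$ lies in the tangent cone of the closed convex set $\bar\sigma$ at $y$, which is a closed polyhedral cone; hence its limit $u$ lies there too, and $y+tu\in\bar\sigma$ for all sufficiently small $t>0$. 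The whole computation therefore never leaves $\bar\sigma$. Finally, a small slip: with $u=w/\|w\|$ for $w\in\Zed^2$, the point $x+\lambda u$ is rational when $\lambda$ is a positive rational multiple of $\|w\|$, not for arbitrary small rational $\lambda$.
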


In Theorem \ref{theorem:aereo} this result will be
generalised to  all finitely generated MV-algebras,
using the  higher-order
Bouligand-Severi tangents
 defined in \ref{definition:k-tangent}-\ref{definition:outgoing}.

 \section{Preliminaries}
\subsection{Semisimple MV-algebras}
We refer the reader to \cite{CiDoMu2000} for background on MV-algebras. 
We let $\McNn$ denote the MV-algebra of piecewise (affine) linear
continuous functions ${f \colon [0,1]^{n}\to [0,1]}$, such that each
linear
piece of $f$ has integer coefficients, with the pointwise
operations of  the standard  MV-algebra $[0,1]$.
 $\McNn$ is the free $n$-generator MV-algebra.
More generally, for any nonempty subset ${X\subseteq [0,1]^{n}}$ we
denote by $\McN(X) $ the MV-algebra of restrictions to $X$ of
the functions in $\McNn$. For every $f\in \McN(X) $ we let
$Zf=f^{-1}(0)$.

By an {\it ideal}  of
an MV-algebra $A$  we mean the kernel of an (MV-)homo\-morphism.
  An ideal is  {\it principal} if it is singly  generated. 
  For each $a\in A$, the principal ideal $\langle a\rangle$ generated by $a$ is the set $\{b\in A\mid \mbox{for some }m\in\mathbb{Z}_{>0}, b\leq ma\}$. An ideal $I$ is   {\it maximal} if $I\neq A$,
  and whenever $J$ is an ideal such that $I\subseteq J$,
   then $J=I$ or $J=A$.
For each closed set $X\subseteq [0,1]^n$ and   $x\in X$, the set $I_x=\{f\in\McN(X)\mid f(x)=0\}$ is a maximal ideal
 of $\McN(X)$. Moreover, for each maximal ideal $I$ of $\McN(X)$, there exists a uniquely
determined  $x\in X$ such that $I=I_x$

   An MV-algebra $A$  is said to be {\it semisimple} if the intersection of its maximal ideals
 is $\{0\}$.
Each semisimple MV-algebra is isomorphic to  a separating MV-algebra of continuous maps from a compact Hausdorff space into $[0,1]$. In particular, if $A$ is an $n$-generated semisimple MV-algebra then there is a closed set $X\subseteq [0,1]^n$ such that $A\cong\McN(X)$.

An  MV-algebra $A$ is {\it strongly semisimple}
 if every principal ideal   of $A$ is an intersection of 
 maximal ideals of $A$.
Equivalently, $A$ is strongly semisimple if for each $a\in A$, the quotient algebra $A/\langle a\rangle$ is semisimple.

\subsection{Simplicial Geometry}
We refer to \cite{Ew1996}, \cite{RS1972} and \cite{St1967} for
background in elementary polyhedral topology.

For any set $\{v_{0},\dots,v_m\}\subseteq \Rn$, $\conv(v_{0},\dots,v_m)$ denotes its {\it convex hull}. If  $\{v_{0},\dots,v_m\}$ are affinely independent then $S=\conv(v_{0},\dots,v_m)$ is an {\it $m$-simplex}.
For any $V\subseteq \{v_{0},\dots,v_m\}$, the convex hull $\conv(V)$ is called a
{\it face} of $S$. If $|V|=m-1$ then $\conv(V)$ is called a {\it facet} of $S$.

For any $m$-simplex $S=\conv(v_0,\ldots,v_m)\subseteq\Rn$, we let
$\aff S$ denote the {\it affine hull} of $S$,  i.e.,
\begin{align*}
\aff S&=\textstyle\{\sum_{i=0}^{m}\lambda_i v_i\mid \mbox{for some }\lambda_i\in\mathbb{R},\mbox{ with } \sum_{i=0}^{m}\lambda_i=1\}\\
&=v_0+\mathbb{R}(v_1-v_0)+\cdots +\mathbb{R}(v_m-v_{m-1}).
\end{align*}
Further,   we write  $\relint S$ for the
{\it relative interior} of $S$, that is, the topological
 interior of $S$ in the relative topology of  ${\rm aff}(S)$.
For each $v\in\Rn$, $||v||$ denotes the euclidean norm of $v$ in $\Rn$.
For each $0<\delta\in\mathbb R$ and $v\in\Rn$  we use
the notation
$B(\delta,v)=\{w\in\Rn\mid ||v-w||<\delta\}$ for the
open ball of radius $\delta$  centred at $v$. Then
$$
\relint S=\{v\in \Rn\mid \mbox{for some }\delta>0,\  B(\delta,v)\cap\aff S\subseteq S\}.
$$

 For later use
we record here some elementary properties of simplexes.

\begin{lemma}\label{Lem:DesInc}
Let $T\subseteq \Rn$ be a simplex and $F$ a face of $\,T$. If $x\in T$
 then $$x\in \relint F\ \mbox{ iff }\ \  F\mbox{ is the smallest face of }\, T\mbox{ such that }x\in F.$$
 Moreover, for any simplex $S$ contained in $T$ we have
	$$
		S\subseteq F\ \ \mbox{ iff }\ \  F\cap \relint S\neq\emptyset.
	$$
\end{lemma}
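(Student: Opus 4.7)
The approach is to work in barycentric coordinates relative to the vertices of $T$. Writing $T=\conv(v_0,\ldots,v_m)$ with $v_0,\ldots,v_m$ affinely independent, every point $x\in T$ has a unique expression $x=\sum_{i=0}^m\lambda_i v_i$ with $\lambda_i\geq 0$ and $\sum_i\lambda_i=1$. The faces of $T$ are precisely the convex hulls $\conv(V)$ for $V\subseteq\{v_0,\ldots,v_m\}$, and by uniqueness of barycentric coordinates a face $\conv(V)$ contains $x$ exactly when the \emph{support} $V(x):=\{v_i\mid \lambda_i>0\}$ is contained in $V$. Hence $\conv(V(x))$ is the (unique) smallest face of $T$ containing $x$.

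For the first equivalence I would then show that $x\in\relint\conv(V(x))$. Since the barycentric coordinates extend to continuous affine functions on $\aff\conv(V(x))$ and all coordinates at $x$ corresponding to vertices in $V(x)$ are strictly positive, a sufficiently small ball around $x$ in $\aff\conv(V(x))$ keeps those coordinates positive, and the remaining coordinates are identically zero on $\aff\conv(V(x))$; hence this ball is contained in $\conv(V(x))$. Conversely, if $x\in\relint F$ for some face $F$ and $F'=\conv(V(x))\subsetneq F$, then $x$ lies in the relative boundary of $F$ (because any segment in $\aff F$ through $x$ going out of $\aff F'$ immediately exits $F$), contradicting $x\in\relint F$. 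This gives $F=\conv(V(x))$, i.e.\ $F$ is the smallest face of $T$ containing $x$.

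For the \emph{moreover} part, the forward direction is immediate: $\relint S\subseteq S\subseteq F$ and $\relint S\neq\emptyset$, so $F\cap\relint S\neq\emptyset$. For the converse, suppose $x\in F\cap\relint S$ and write $S=\conv(s_0,\ldots,s_k)$ so that $x=\sum_j\mu_j s_j$ with all $\mu_j>0$. Expanding each $s_j=\sum_i\lambda^j_i v_i$ in barycentric coordinates on $T$, the $i$-th barycentric coordinate of $x$ equals $\sum_j\mu_j\lambda^j_i$, which (since $\mu_j>0$) is positive iff $\lambda^j_i>0$ for some $j$. Therefore the support of each $s_j$ is contained in $V(x)$, so $s_j\in\conv(V(x))$ and hence $S\subseteq \conv(V(x))$. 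By the first part applied to $x\in F$, the smallest face $\conv(V(x))$ is contained in $F$, giving $S\subseteq F$.

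The proof is essentially routine once one commits to barycentric coordinates; the only mild subtlety is the bookkeeping in the \emph{moreover} part showing that the support of each vertex $s_j$ of $S$ sits inside the support $V(x)$ of the witnessing point $x\in\relint S$, which hinges on the strict positivity of the $\mu_j$.
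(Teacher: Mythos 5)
Your barycentric-coordinate argument is correct: identifying the smallest face containing $x$ with $\conv(V(x))$ for $V(x)$ the support of $x$, checking $x\in\relint\conv(V(x))$, and in the \emph{moreover} part using strict positivity of the $\mu_j$ to force the support of each vertex of $S$ into $V(x)$ all go through. The paper itself records this lemma without proof as an elementary fact of polyhedral topology (deferring to its references), and your proof is exactly the standard one; the only spot worth tightening is the parenthetical in the converse of the first equivalence, where instead of ``any segment through $x$ leaving $\aff F'$ immediately exits $F$'' it is cleaner to exhibit explicitly the points $x+t(x-v_j)$ for $v_j$ a vertex of $F$ outside $V(x)$, whose $j$-th barycentric coordinate is negative for $t>0$, showing $x\notin\relint F$.
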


\subsubsection*{Notation and Terminology}	
Given  $x\in \Rn$,
a $k$-tuple 
$u = (u_1, \dots , u_k )$  of pairwise
	orthogonal unit vectors in $\Rn$ and a $k$-tuple $\lambda=( \lambda_1,\ldots, \lambda_k)\in\mathbb{R}_{>0}^k$,
 we write 
$$C_{x,u,\lambda}=\conv(x,x+\lambda_1u_1,\ldots, x+\lambda_1u_1+\cdots+\lambda_ku_k),
$$

For any $k$-tuple
 $a=(a_1, \dots , a_k )$  and $l=1,\ldots,k$ we let
 $a(l)$ be an abbreviation of the initial segment  $(a_1,\ldots,a_l)$. 
Then  
  the simplex $C_{x,u(l),\lambda(l)}$ is a face of $C_{x,u,\lambda}$.

\begin{lemma}\cite[Prop. 2.2]{BuMu2007}\label{Lem:CapUsimp}
	For each $x\in \Rn$ and   $k$-tuple $u = (u_1, \dots, u_k )$ of pairwise 
	orthogonal unit vectors in $\Rn$, 
	the family of $(x,u)$-simplexes ordered by inclusion is down-directed. 
	That is, if $C_1$ and $C_1$ are $(x,u)$-simplexes, then $C_1 \cap C_2$ contains  
 an $(x,u)$-simplex.
\end{lemma}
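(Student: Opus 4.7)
My plan is to parametrize $(x,u)$-simplexes via their coefficients along the orthonormal vectors $u_j$, and then to construct a small enough scaling $\nu$ so that $C_{x,u,\nu}$ lies inside both given simplexes.

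First I would establish an explicit parametrization. Writing the vertices of $C_{x,u,\lambda}$ as $v_0=x$ and $v_i=x+\lambda_1 u_1+\cdots+\lambda_i u_i$, a convex combination $\sum_{i=0}^{k}\alpha_i v_i$ rearranges to $x+\sum_{j=1}^{k}\beta_j\lambda_j u_j$ where $\beta_j=\sum_{i=j}^{k}\alpha_i$. The constraints $\alpha_i\ge 0$, $\sum_i\alpha_i=1$ translate into $1\ge\beta_1\ge\cdots\ge\beta_k\ge 0$. Since the $u_j$ are pairwise orthogonal unit vectors, the projections $t_j:=\langle y-x,u_j\rangle$ of any $y$ in the affine span are unique, so $y\in C_{x,u,\lambda}$ iff $1\ge t_1/\lambda_1\ge t_2/\lambda_2\ge\cdots\ge t_k/\lambda_k\ge 0$.

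Given $C_1=C_{x,u,\lambda}$ and $C_2=C_{x,u,\mu}$, I would set $\nu_j=\varepsilon^{j}$ with
$$
0<\varepsilon\le\min\!\Bigl(1,\,\lambda_1,\,\mu_1,\,\min_{1\le j<k}\tfrac{\lambda_{j+1}}{\lambda_j},\,\min_{1\le j<k}\tfrac{\mu_{j+1}}{\mu_j}\Bigr).
$$
These bounds force both $(\nu_j/\lambda_j)_{j=1}^k$ and $(\nu_j/\mu_j)_{j=1}^k$ to be nonincreasing sequences bounded above by $1$. Now let $y=x+\sum_j\beta_j\nu_j u_j\in C_{x,u,\nu}$ with $1\ge\beta_1\ge\cdots\ge\beta_k\ge 0$. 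The sequence $t_j/\lambda_j=\beta_j(\nu_j/\lambda_j)$ is then the pointwise product of two nonincreasing nonnegative sequences, each bounded by $1$, and hence is itself nonincreasing and bounded by $1$. By the parametrization above, $y\in C_1$; the symmetric argument gives $y\in C_2$, so $C_{x,u,\nu}\subseteq C_1\cap C_2$.

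The only delicate ingredient is the choice of $\nu$: one must simultaneously control the monotonicity of $\nu_j/\lambda_j$ and of $\nu_j/\mu_j$ while keeping both sequences bounded by $1$. The geometric ansatz $\nu_j=\varepsilon^j$ with $\varepsilon$ sufficiently small handles this uniformly, and after that the containment reduces to a direct monotonicity computation enabled by the orthogonality of the $u_j$.
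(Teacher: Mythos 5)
Your proof is correct. The coordinate characterization $y\in C_{x,u,\lambda}$ iff $1\ge t_1/\lambda_1\ge\cdots\ge t_k/\lambda_k\ge 0$ (for $y$ in the common affine hull, with $t_j=\langle y-x,u_j\rangle$) is exactly right, and the choice $\nu_j=\varepsilon^j$ with $\varepsilon$ below the stated minimum does make both ratio sequences nonincreasing and bounded by $1$, so the pointwise-product argument closes the containment. Note that the paper itself gives no proof of this lemma --- it is imported from \cite[Prop.~2.2]{BuMu2007} --- so there is nothing in-paper to compare against; your argument is a valid self-contained substitute, and it proceeds in the same spirit as the cited source (describing $(x,u)$-simplexes by their coordinates along the orthonormal frame and shrinking the scale vector).
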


\subsection{Rational Polyhedra and $\Zed$-maps}

An $m$-simplex $S=\conv(v_0,\ldots,v_m)$  is said to be {\it rational} if the coordinates of
each vertex  of $S$ are rational numbers.  
A subset
$P$ of $\Rn$
 is said to be a {\it rational polyhedron} if there are rational
simplexes $T_{1},\ldots,T_{l}$
such that $P=T_{1}\cup\cdots\cup T_{l}$.

Given a rational polyhedron $P$,   a {\it triangulation}  of
$P$ is a simplicial complex $\Delta$ such that $P=\bigcup\Delta$ and each simplex $S\in\Delta$ is rational.
Given triangulations $\Delta$ and $\Sigma$ of $P$, 
we say that $\Delta$ is a {\it subdivision} of $\Sigma$ if every
simplex of $\Delta$ is contained in a simplex of $\Sigma$.

For $v$ a rational point in $\Rn$ we let $\den(v)$ denote the
least common denominator of the coordinates of $v$.
A rational $m$-simplex $S=\conv(v_0,\ldots,v_m)\subseteq R^{n}$ is called {\it regular} if the set of vectors
$\{\den(v_0)(v_0,1),\ldots,\den(v_m)(v_m,1)\}$ is part of a basis of the free abelian
group $\Zed^{n+1}.$
By a {\it regular triangulation} of a rational polyhedron $P$ we understand a
triangulation of $P$ consisting of regular simplexes.

   Given polyhedra  $P\subseteq \Rn$ and $Q\in\mathbb{R}^m$, a map $\eta\colon P\rightarrow Q$ is called a {\it $\Zed$-map}
   if there is a  triangulation $\Delta$  of $P$
   such that on every simplex $T$  of  $\Delta$, $\eta$ coincides
with an affine linear map $\eta_{T}\colon \Rn\rightarrow \mathbb{R}^m$ with integer coefficients.
In particular,
 $f\in\McNn$ iff it is a $\Zed$-map form $[0,1]^n$ to $[0,1]$.

For later use we 
  recall here some properties of regular triangulations and $\Zed$-maps. (See \cite[Chapters 2,3]{Mu2011} for the proofs.)

\begin{lemma}\label{lem:ExtTriang}
	Let $P$ and $Q$ be  rational polyhedra and 
	$\Delta$ a rational triangulation of  $P$. 
	If $Q\subseteq P$,
	there exists a regular triangulation 
	$\Delta'$ of $P$ which  is a subdivision of $\Delta$
	and also satisfies $Q=\bigcup\{S\in\Delta'\mid S\subseteq Q\}$.
\end{lemma}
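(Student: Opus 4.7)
The plan is to proceed in two stages: first produce a rational (not necessarily regular) subdivision of $\Delta$ that already has $Q$ as a subcomplex, and then use the standard desingularization procedure to pass from rational to regular without destroying the subcomplex structure of $Q$.

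For the first stage, I would use that $Q$ is itself a rational polyhedron, so it admits some rational triangulation $\Gamma$. Intersecting the simplexes of $\Delta$ with those of $\Gamma$ (together with the facial structure they induce) yields a rational polyhedral complex $\Sigma_0$ refining $\Delta$ on $P$ and such that $Q = \bigcup\{C \in \Sigma_0 \mid C \subseteq Q\}$. Triangulating $\Sigma_0$ cell by cell (without introducing new vertices in the interior of cells of $\Gamma$ that lie inside $Q$, i.e., using only vertices already present) produces a rational triangulation $\Sigma$ of $P$ which is a subdivision of $\Delta$ and satisfies $Q = \bigcup\{S \in \Sigma \mid S \subseteq Q\}$. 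This is a routine application of the common-refinement techniques of elementary PL topology (\cite{RS1972}).

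For the second stage, I would invoke the De Concini--Procesi desingularization theorem in the form available in \cite[Ch.~2,~3]{Mu2011}: any rational triangulation $\Sigma$ of a rational polyhedron admits a regular subdivision $\Delta'$, obtained by a finite sequence of stellar subdivisions at rational points added in the interiors of non-regular simplexes. The crucial property of stellar subdivision I need is that when one subdivides a simplex $T \in \Sigma$ by introducing a new vertex $w \in \relint T$, only simplexes having $T$ as a face are affected, and every simplex $S$ of the old triangulation either remains in the new one or is partitioned into new simplexes whose union is still $S$. Hence if $Q$ is a union of simplexes of $\Sigma$, it remains a union of simplexes after each stellar subdivision, and therefore after the whole desingularization process.

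The main obstacle, and the only non-trivial point, is the second stage: the regularisation procedure must not break the subcomplex structure of $Q$. This is however built into the stellar nature of the De Concini--Procesi algorithm as cited above. Once that is in hand, chaining the two stages yields a regular triangulation $\Delta'$ of $P$ that subdivides $\Delta$ and satisfies $Q = \bigcup\{S \in \Delta' \mid S \subseteq Q\}$, as required.
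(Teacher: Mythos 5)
The paper gives no proof of this lemma; it simply cites \cite[Chapters 2, 3]{Mu2011}, and your two-stage argument (first make $Q$ a subcomplex of a rational subdivision of $\Delta$, then desingularise by stellar subdivisions at rational points, which refine every existing simplex and hence preserve the subcomplex structure of $Q$) is exactly the standard proof found there. One small imprecision in your first stage: the pairwise intersections $T\cap S$ ($T\in\Delta$, $S\in\Gamma$) only cover $Q$, not $P$, so they do not by themselves form a complex refining $\Delta$ on all of $P$; the usual fix is to cut every simplex of $\Delta$ along the finitely many rational hyperplanes spanned by the facets of the simplexes of $\Gamma$ (or of the $T_i$ with $Q=T_1\cup\cdots\cup T_l$) and then triangulate the resulting rational cells without adding new vertices. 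With that adjustment the argument is complete.
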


\begin{lemma}\label{lem:PreImag}
	Let $P$ and $Q$ be  rational polyhedra, 
	and $\eta\colon P\to Q$ 
 	a $\mathbb{Z}$-map. If $R$ is
	 a rational polyhedron contained in $Q$,
 	then $\eta^{-1}(R)$ is a rational polyhedron.
\end{lemma}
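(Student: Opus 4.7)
The plan is to localize along the triangulation $\Delta$ of $P$ that witnesses $\eta$ being a $\mathbb{Z}$-map, and then reduce everything to the statement that the preimage of a rational polyhedron under an affine map with integer coefficients is a rational polyhedron.

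Since $\Delta$ is a finite triangulation and $\eta$ agrees on each $T\in\Delta$ with an affine map $\eta_T\colon \mathbb{R}^n\to\mathbb{R}^m$ with integer coefficients,
\[
\eta^{-1}(R)\;=\;\bigcup_{T\in\Delta}\bigl(T\cap \eta_T^{-1}(R)\bigr).
\]
By the very definition of a rational polyhedron as a finite union of rational simplexes, the class of rational polyhedra is closed under finite unions (concatenating finite lists of rational simplexes yields a finite list of rational simplexes). Hence it is enough to prove that $T\cap \eta_T^{-1}(R)$ is a rational polyhedron for every fixed $T\in\Delta$.

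To handle this piece I would switch to the half-space description: any rational polyhedron $R\subseteq \mathbb{R}^m$ can be written as a finite intersection
$R=\bigcap_{j}\{y\in\mathbb{R}^m\mid a_j\cdot y\leq b_j\}$
with $a_j\in\mathbb{Q}^m$ and $b_j\in\mathbb{Q}$ (the Weyl--Minkowski equivalence of the simplicial and half-space presentations of rational polyhedra; see \cite{Ew1996}). Since $\eta_T$ has integer coefficients,
\[
\eta_T^{-1}(R)\;=\;\bigcap_j \bigl\{x\in\mathbb{R}^n\mid a_j\cdot \eta_T(x)\leq b_j\bigr\}
\]
is cut out by finitely many rational affine inequalities in $x$, so it is a rational polyhedron in $\mathbb{R}^n$ (or $\mathbb{R}^n$ itself, or empty). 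The rational simplex $T$ is also a rational polyhedron, and finite intersections of rational polyhedra are rational polyhedra (again by merging half-space presentations and re-triangulating via Lemma \ref{lem:ExtTriang}). Therefore $T\cap \eta_T^{-1}(R)$ is a rational polyhedron, and taking the union over $T\in\Delta$ finishes the proof.

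The only ingredient that is not a direct computation with inequalities is the Weyl--Minkowski equivalence between the ``vertex'' (finite union of rational simplexes) and ``facet'' (finite intersection of rational half-spaces) presentations of a rational polyhedron. This is the single point I would expect to cite from the polyhedral topology references \cite{Ew1996,RS1972} rather than reprove, and it is the only non-routine step; everything else is bookkeeping with the triangulation $\Delta$ and with rational linear inequalities.
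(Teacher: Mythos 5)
The paper offers no proof of this lemma at all: it is one of the facts explicitly ``recalled'' from \cite[Chapters 2,3]{Mu2011}, so there is no in-paper argument to compare against. Your strategy --- localize to the simplexes of a triangulation $\Delta$ linearizing $\eta$, reduce to preimages under the affine integer maps $\eta_T$, and use closure of rational polyhedra under finite unions --- is the standard proof of this fact and is essentially the one in Mundici's book.

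There is, however, one step that fails as written. You assert that \emph{any} rational polyhedron $R\subseteq\mathbb{R}^m$ is a finite intersection of rational half-spaces. In this paper a rational polyhedron is a finite union of rational simplexes; it need not be convex, or even connected, so it generally admits no half-space presentation (Weyl--Minkowski applies to convex polytopes, not to arbitrary polyhedra). The repair is the same move you already use for $P$: write $R=R_1\cup\dots\cup R_l$ with each $R_j$ a rational simplex, note that $\eta_T^{-1}(R)=\bigcup_j\eta_T^{-1}(R_j)$, and run your half-space argument on each (convex, rational) $R_j$. Then $T\cap\eta_T^{-1}(R_j)$ is a bounded set cut out by finitely many affine inequalities with rational coefficients, hence a rational convex polytope (its vertices, being solutions of rational linear systems, are rational), hence a rational polyhedron; taking the finite union over $j$ and over $T\in\Delta$ finishes the proof. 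With that fix the argument is correct. A small additional remark: the appeal to Lemma~\ref{lem:ExtTriang} to justify that finite intersections of rational polyhedra are rational polyhedra is not the right tool (that lemma is about subdividing a given triangulation relative to a subpolyhedron); the direct observation that a compact solution set of rational affine inequalities is a rational polytope is all you need, and it already covers the intersection with $T$.
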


\begin{lemma}\label{lem:TriangReg} 
	Let $P$ and $Q$ be  rational polyhedra, 
	$\eta\colon P\to Q$ a $\mathbb{Z}$-map and 
	$\Delta$ a triangulation of $P$. 
 	Then there is a regular triangulation $\nabla$ of $P$
  	which is a subdivision of $\Delta$ 
	and has the property that the restriction
   	$\eta\restrict_S$ of $\eta$ to $S$
    	is (affine) linear for each $S\in\nabla$.
\end{lemma}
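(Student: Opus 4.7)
The plan is to first produce a rational triangulation of $P$ that simultaneously refines $\Delta$ and linearises $\eta$, and then to upgrade that triangulation to a regular one by appealing to Lemma \ref{lem:ExtTriang}. So the refinement is built in two stages: one stage makes the combinatorics compatible with $\eta$, the other stage makes it compatible with the lattice $\Zed^{n+1}$.

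First, by the very definition of a $\mathbb{Z}$-map there is a rational triangulation $\Delta_\eta$ of $P$ on each simplex of which $\eta$ agrees with an affine linear map with integer coefficients. Standard PL techniques produce a common rational refinement $\Sigma$ of $\Delta$ and $\Delta_\eta$: one lists the finitely many intersections $S\cap T$ for $S\in\Delta$ and $T\in\Delta_\eta$, observes that each such intersection is a rational polytope, rationally triangulates each of them, and glues the pieces together along faces. By construction every simplex of $\Sigma$ is contained in a simplex of $\Delta$ and in a simplex of $\Delta_\eta$; in particular, for every $U\in\Sigma$, the restriction $\eta\restrict_U$ inherits affine linearity from $\Delta_\eta$.

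Next I would apply Lemma \ref{lem:ExtTriang} to the rational polyhedron $P$ triangulated by $\Sigma$, taking the subpolyhedron to be $P$ itself (or any subpolyhedron one wishes to preserve). This produces a regular triangulation $\nabla$ of $P$ that is a subdivision of $\Sigma$, and hence also a subdivision of $\Delta$. Moreover, each $S'\in\nabla$ lies inside some $U\in\Sigma$, and the restriction $\eta\restrict_{S'}$ is just the restriction of the affine linear map $\eta\restrict_U$, hence affine linear itself. This gives $\nabla$ all the asserted properties.

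The main obstacle is not really the combinatorics of common refinements, which is completely standard, but the jump from \emph{rational} to \emph{regular} in the second stage: this is the genuinely arithmetic ingredient, resting on desingularisation of rational fans via stellar subdivisions (a De Concini--Procesi style argument) and packaged here as Lemma \ref{lem:ExtTriang}. Once that lemma is available, the proof reduces to the clean two-step refinement described above.
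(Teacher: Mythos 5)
Your proposal is correct and is essentially the standard argument: the paper itself gives no proof of Lemma \ref{lem:TriangReg}, referring instead to \cite[Chapters 2,3]{Mu2011}, where precisely this two--step scheme (common rational refinement of $\Delta$ and the linearity triangulation $\Delta_\eta$ of $\eta$, followed by a desingularisation into a regular subdivision) is carried out. The one place where your sketch is looser than it should be is the gluing step: if you triangulate each cell $S\cap T$ \emph{independently}, the pieces need not match along shared faces, so you do not automatically get a simplicial complex. The standard fix is to observe that $\{S\cap T \mid S\in\Delta,\ T\in\Delta_\eta\}$ is a rational polyhedral cell complex and to triangulate it coherently without adding new vertices (e.g.\ by a pulling triangulation with respect to a fixed ordering of the vertices); with that adjustment, and the application of Lemma \ref{lem:ExtTriang} exactly as you describe, the proof is complete.
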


\begin{lemma}\label{Lem:ZedMapExten} 
Let $\Delta$ be  a regular
triangulation of a polyhedron $P\subseteq \mathbb{R}^{m}$ and $V$ the set of vertices of the simplexes of $\Delta$. 
Suppose the map
   $f\colon V\rightarrow \Rn$  has the property that
 $f(v)$ is a rational vector of $\Rn$ and $\den(f(v))$ divides $\den(v)$ for each $v\in V$.
 Then there exists a unique $\Zed$-map $\mu\colon P\rightarrow
\Rn$ satisfying:
   \begin{itemize}
     \item[{\rm (i)}] $\mu$ is linear on each simplex of $\Delta$,
     \item[{\rm (ii)}] $\mu\restrict{V}=f$.
   \end{itemize}
\end{lemma}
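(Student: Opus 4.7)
My plan is to construct $\mu$ simplex by simplex via affine interpolation, and then use the regularity hypothesis to show the resulting function has integer coefficients on each piece. Uniqueness is immediate since the two conditions pin down the values on vertices and force affine behavior on each simplex.

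For each simplex $S=\conv(v_0,\dots,v_k)\in\Delta$, define $\mu_S\colon S\to\Rn$ on a barycentric combination $x=\sum_{i=0}^{k}\lambda_i v_i$ (with $\lambda_i\geq 0$, $\sum\lambda_i=1$) by
$$\mu_S(x)=\sum_{i=0}^{k}\lambda_i f(v_i).$$
These pieces glue into a single continuous map $\mu\colon P\to\Rn$: if $S,T\in\Delta$ intersect in a common face $F$, the formula defining $\mu_S\restrict F$ uses only the values of $f$ on the vertices of $F$, and likewise for $\mu_T\restrict F$, so $\mu_S\restrict F = \mu_T\restrict F$. Conditions (i) and (ii) are built in.

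The crux is showing that $\mu_S$ is the restriction to $S$ of an affine map $\Rn\to\Rn$ with integer coefficients. Here I would pass to homogeneous coordinates. Writing $\hat v_i=\den(v_i)(v_i,1)\in\mathbb{Z}^{m+1}$, regularity of $S$ gives that $\hat v_0,\dots,\hat v_k$ extend to a $\mathbb{Z}$-basis $\hat v_0,\dots,\hat v_k,w_{k+1},\dots,w_m$ of $\mathbb{Z}^{m+1}$. Because $\den(f(v_i))\mid \den(v_i)$, the vector
$$\hat g_i:=\den(v_i)(f(v_i),1)=\tfrac{\den(v_i)}{\den(f(v_i))}\cdot\den(f(v_i))(f(v_i),1)$$
lies in $\mathbb{Z}^{n+1}$. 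Define a $\mathbb{Z}$-linear map $L\colon \mathbb{Z}^{m+1}\to\mathbb{Z}^{n+1}$ by $L(\hat v_i)=\hat g_i$ for $i\leq k$ and $L(w_j)=0$ for $j>k$, and extend $\mathbb{R}$-linearly. In the standard basis, $L$ is an integer matrix; writing it in block form, the last row reads $(c^{\top},d)$ and the first $n$ rows give an integer pair $(A,b)$ with $A\in\mathbb{Z}^{n\times m}$, $b\in\mathbb{Z}^n$.

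For $x=\sum\lambda_i v_i\in S$, one has $(x,1)=\sum \frac{\lambda_i}{\den(v_i)}\hat v_i$, so
$$L(x,1)=\sum\tfrac{\lambda_i}{\den(v_i)}\hat g_i=\sum\lambda_i(f(v_i),1)=(\mu_S(x),1).$$
Reading off the first $n$ coordinates, $\mu_S(x)=Ax+b$ with $A,b$ integer, as required; the last coordinate being identically $1$ is automatic from the computation. Since this works for every $S\in\Delta$, $\mu$ is a $\mathbb{Z}$-map witnessed by the triangulation $\Delta$ itself. No step is really hard; the only subtlety is recognizing that the integrality needed in the definition of a $\mathbb{Z}$-map follows precisely from the regularity of $\Delta$ together with the divisibility hypothesis $\den(f(v))\mid\den(v)$, via the homogeneous trick above.
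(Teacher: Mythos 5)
Your proof is correct and is essentially the standard argument: the paper itself gives no proof of this lemma, deferring to \cite[Chapters 2,3]{Mu2011}, where the construction is exactly this one — affine interpolation on each simplex, agreement on shared faces, and integrality of the coefficients extracted from the unimodularity of the homogeneous vertex vectors $\den(v_i)(v_i,1)$ together with the divisibility hypothesis $\den(f(v_i))\mid\den(v_i)$. Nothing is missing; the gluing step, the integrality argument via the extended $\mathbb{Z}$-basis, and the uniqueness observation are all sound.
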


 \section{Strong semisimplicity and Bouligand-Severi tangents} 
 Here we introduce $k$-dimensional Bouligand-Severi tangents,  replacing
the unit vector $u$ of  Definition \ref{definition:severi}
by a $k$-tuple $u=(u_1,\ldots,u_k)$ of pairwise orthogonal unit	vectors in $\mathbb R^n$.
For each   $l\leq k$, let 
\[
\mathsf{p}_{l}\colon \Rn\to \mathbb{R}u_1+\cdots+\mathbb{R}u_{l}
\] denote the orthogonal projection map onto the linear
subspace of $\Rn$ generated by $u_1,\ldots,u_l$.

\begin{definition}
\label{definition:k-tangent}
A $k$-tuple
$u=(u_1,\ldots,u_k)$ of 
pairwise orthogonal unit vectors in $\mathbb R^n$
is said to be a  {\it Bouligand-Severi tangent of $X$ at $x$ of degree $k$}
	(for short,  {\it $u$ is a $k$-tangent of $X$ at $x$}) if
 	$X$ contains a sequence of points  $x_1,x_2,\ldots$
	converging to $x$,  such that
	no  vector  $x_i-x$ lies in  
	$\mathbb{R}u_1+\cdots+\mathbb{R}u_k$ 
 and  upon defining  ${x_i^1= {(x_i-x)}/{||x_i-x||}}$ and
 inductively,  
			$$
				x_i^{l}=\frac{x_i-x -\mathsf{p}_{l-1}(x_i-x)}{||x_i-x -\mathsf{p}_{l-1}(x_i-x)||}\,\,\,\,\,  \mbox{ $(l\leq k)$},
			$$
it follows that   
			$\lim_{i\to \infty }x^s_i =  u_s,\,\,\,$
			for each $s\in\{1,\ldots,k\}$.
The   sequence $x_1,x_2,\ldots$ is said to   
{\it determine}  $u$. 
\end{definition}

Conditions
  (ii) and (iii) in  Theorem~\ref{Thm:BusMun}
have the following generalisation:

\begin{definition}
\label{definition:outgoing}
A $k$-tangent  $u=(u_1,\ldots,u_k)$  of 
 $X\subseteq \mathbb R^n$    at   $x$ 
 {\it  is rationally outgoing}
 if  there is a rational simplex $S$,
  together with a
face $F\subseteq S$  and
  a $k$-tuple
  $\lambda=(\lambda_1,\ldots,\lambda_k)\in\mathbb{R}_{>0}$  
  such that
$
S\supseteq C_{x,u,\lambda},\,\,\,F\not\supseteq C_{x,u,\lambda}\,\,\,\mbox{and}\,\,\,F\cap X = S\cap X.
$
\end{definition}

\subsubsection*{Remarks}
When $k=1$, Definition \ref{definition:k-tangent}
 amounts to the classical 
 Definition \ref{definition:severi} of a 
Bouligand-Severi  tangent of
  a closed set in $\mathbb{R}^n$. 
Any subsequence of $x_0,x_1,\dots$ also 
determines the tangent $u$. 
Further,
 if $u=(u_1,\ldots,u_k)$ is a $k$-tangent of $X$ at $x$ then ${u(l)=(u_1,\ldots,u_l)}$ is an $l$-tangent of $X$ at $x$ for each $l\in\{1,\ldots,k\}$.

If $u$ is a rationally outgoing  $k$-tangent of $X\subseteq \Rn$ then,
trivially,  $k<n$. In particular if $n=2$ then necessarily $k$ is equal to 1, 
and there is  $\epsilon>0$ such that 
$\conv(x,x+\epsilon u_1)$ is a rational polyhedron and $X\cap\conv(x,x+\epsilon u_1)=\{x\}$.
The main result of \cite{BuMu201X} (Theorem~\ref{Thm:BusMun0}
above) can now be restated as follows:

\begin{theorem}\label{Thm:BusMun}
Let $X\subseteq \I^2$ be a closed set. The MV-algebra 
$\McN(X)$ is strongly semisimple  iff  $X$ 
does not have a rationally outgoing $1$-tangent. 
\end{theorem}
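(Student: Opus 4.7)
I plan to reduce the statement to Theorem~\ref{Thm:BusMun0} by showing that in the plane, $X$ admits a rationally outgoing $1$-tangent if and only if there exist $x\in X$, a unit vector $u\in\mathbb{R}^2$ and $\lambda>0$ satisfying conditions (i)--(iii) of that theorem. Theorem~\ref{Thm:BusMun} will then follow immediately.

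The easy direction is (i)--(iii) $\Rightarrow$ rationally outgoing: take $S=\conv(x,x+\lambda u)$ and $F=\{x\}$, both rational by (iii). Then $F\not\supseteq C_{x,u,\lambda}=S$ holds on dimensional grounds, and (ii) gives $F\cap X=S\cap X=\{x\}$. Moreover, (ii) prevents any tail of the Bouligand--Severi sequence determining $u$ from lying on the ray $\{x+tu:t>0\}$, so a suitable subsequence exhibits $u$ as a $1$-tangent in the sense of Definition~\ref{definition:k-tangent}.

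The substantive direction is the converse. Given a rationally outgoing $1$-tangent $u$ at $x$ witnessed by a rational simplex $S\subseteq\mathbb{R}^2$, a face $F$ of $S$, and $\lambda>0$, closedness of $X$ yields $x\in X$, hence $x\in S\cap X=F\cap X\subseteq F$. I would perform a case analysis on $\dim S\in\{1,2\}$ and, when $\dim S=2$, on whether $F$ is a vertex or an edge and on the position of $x$ within $F$. The uniform mechanism is the following: whenever $u$ lies in the relative interior of the tangent cone of $S$ at $x$, an open neighbourhood of $u$ is contained in that cone, so the tangent-determining points $x_i$ eventually belong to $S$ and hence to $F$; then $(x_i-x)/\|x_i-x\|\to u$ forces $u$ into the linear subspace $\aff(F)-x$. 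Combining this with $C_{x,u,\lambda}\subseteq S$, $F\not\supseteq C_{x,u,\lambda}$, and the elementary fact that the line supporting an edge of a triangle meets that triangle only in the edge itself, each case either contradicts its own hypotheses or forces $u$ to be parallel to a rational direction issuing from a rational point~$x$.

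With $x$ rational and $u$ along a rational direction, one selects $0<\lambda'\leq\lambda$ such that $\lambda' u\in\mathbb{Q}^2$; then $x+\lambda' u$ is rational and
\[
\conv(x,x+\lambda' u)\cap X\ \subseteq\ \conv(x,x+\lambda u)\cap X\ \subseteq\ F\cap\conv(x,x+\lambda u)\ =\ \{x\},
\]
so the triple $(x,u,\lambda')$ satisfies (i)--(iii) and Theorem~\ref{Thm:BusMun0} applies. The principal obstacle is the configuration in which $x$ lies in the relative interior of an edge $F$ of a rational triangle $S$: the tangent-sequence argument rules out $u$ transverse to $F$, while $C_{x,u,\lambda}\subseteq S$ together with $F\not\supseteq C_{x,u,\lambda}$ is jointly incompatible when $u$ is parallel to $F$, so this subcase must be shown to be vacuous.
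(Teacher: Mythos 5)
Your proposal is correct and takes essentially the same route as the paper: Theorem~\ref{Thm:BusMun} is presented there as a mere restatement of Theorem~\ref{Thm:BusMun0}, justified only by the preceding remark that in $\mathbb{R}^2$ a rationally outgoing tangent forces $k=1$ together with the rationality of $x$ and of the direction $u$ and the condition $\conv(x,x+\epsilon u)\cap X=\{x\}$. Your case analysis supplies precisely the details the paper leaves implicit, and the subcase you flag ($x$ in the relative interior of an edge $F$ of a rational triangle) is indeed vacuous for exactly the two reasons you give.
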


The main result of our paper is the following generalisation of Theorem~\ref{Thm:BusMun}:

\begin{theorem}
\label{theorem:aereo}
 For any  closed $X\subseteq \cube$
 the following conditions are equivalent:
 \begin{itemize}
 \item[{\rm (i)}] The MV-algebra $\McN(X)$ is strongly semisimple.
 \item[{\rm (ii)}] For no  $k=1,\ldots,n-1,\,\,\,$
 $X$ has a rationally outgoing $k$-tangent.
 \end{itemize}
\end{theorem}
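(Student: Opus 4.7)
The plan is to prove both implications by contrapositive, with a common backbone: a regular triangulation of $\cube$ on which the two witnessing McNaughton functions $f,g$ are simultaneously linear (Lemma~\ref{lem:TriangReg}).

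For $\neg\text{(ii)}\Rightarrow\neg\text{(i)}$, the data $(x,u,\lambda,S,F)$ certifying a rationally outgoing $k$-tangent directly supply the witnesses. Using Lemma~\ref{Lem:ZedMapExten} together with the standard fact that every rational polyhedron in $\cube$ is the zero set of some McNaughton function, construct $f,g\in\McN(\cube)$ with $Zf=S$ and $Zg=F$; since $S\cap X=F\cap X$, we get $Zf\cap X=Zg\cap X=F\cap X$, so $g$ vanishes on $Zf\cap X$. To see $g\not\leq mf$ in $\McN(X)$ for every $m$, refine the triangulation so that $f,g$ are linear on each simplex and fix a simplex $T\ni x$ into which a subsequence of a sequence $y_m$ determining $u$ eventually falls. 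Writing $f|_T(y)=\ell_f(y-x)$ and $g|_T(y)=\ell_g(y-x)$, the integer linear forms satisfy $\ell_f \equiv 0$ on $\aff(S)-x\supseteq\mathrm{span}(u_1,\dots,u_k)$ and $\ell_g\equiv 0$ on $\aff(F)-x$; since $F\not\supseteq C_{x,u,\lambda}$, there is a least $l\leq k$ with $\ell_g(u_l)\neq 0$. The nested-order decay of Definition~\ref{definition:k-tangent} then gives $f(y_m)/g(y_m)\to 0$, so $g\not\leq mf$ for every $m$, exhibiting $\langle f\rangle$ as not an intersection of maximal ideals.

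For $\neg\text{(i)}\Rightarrow\neg\text{(ii)}$, pick witnesses $f,g\in\McN(X)$ with $Zf\cap X\subseteq Zg\cap X$ and $g\not\leq mf$ for every $m$, and $y_m\in X$ with $g(y_m)>m\,f(y_m)$; compactness yields a subsequence $y_m\to x\in X$ with $f(x)=g(x)=0$. After applying Lemma~\ref{lem:TriangReg} and passing to a further subsequence, all $y_m$ lie in a single simplex $T\ni x$ on which $f$ and $g$ are linear with integer forms $\ell_f,\ell_g$ on $T-x$. Iteratively extract unit vectors $u_1,u_2,\dots$ as in Definition~\ref{definition:k-tangent} by passing to sub-subsequences, and let $k$ be the least index with $\ell_g(u_k)\neq 0$. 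The ratio blow-up forces $\ell_f(u_j)=0$ for every $j\leq k$, since otherwise the dominant terms of $f(y_m)$ and $g(y_m)$ would be of the same order and the ratio would stay bounded. This produces a $k$-tangent $u=(u_1,\dots,u_k)$ of $X$ at $x$ determined by $y_m$; the required condition $y_m-x\notin\mathrm{span}(u_1,\dots,u_k)$ follows from $f(y_m)\neq 0$ combined with $\ell_f$ vanishing on that span.

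The main obstacle is upgrading this to a rational outgoing certificate $(S,F,\lambda)$. Using Lemmas~\ref{lem:PreImag} and~\ref{lem:ExtTriang}, realise $Zf\cap T$ as a rational face $F_0$ of a sub-simplex containing $x$ with $\aff(F_0)-x=\ker\ell_f\supseteq\mathrm{span}(u_1,\dots,u_k)$. Shrink $\lambda$ via Lemma~\ref{Lem:CapUsimp} so that $C_{x,u,\lambda}$ sits in the star of $x$ inside a regular subdivision of $F_0$, and take $S$ to be a rational simplex of this subdivision containing $C_{x,u,\lambda}$. Since $f(y_m)\neq 0$ implies $y_m-x\notin\ker\ell_f\supseteq\aff(S)-x$, no $y_m$ ever enters $S$. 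One then selects $F$ as the rational face of $S$ associated with the lower-level directions $u_1,\dots,u_{k-1}$, so that $F\not\supseteq C_{x,u,\lambda}$ by minimality of $k$. Verifying $F\cap X=S\cap X$ is the delicate polyhedral heart of the proof: it reduces to a local analysis in $\aff S$ that uses $Zf\cap X\subseteq Zg\cap X$ to rule out points of $X$ in $S\setminus F$, and this bookkeeping — matching the tangent extraction to the rational polyhedral structure of $X$ near $x$ — is the step I expect to consume most of the technical effort.
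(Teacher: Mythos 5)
Your overall strategy is sound in outline and, for the $\neg(\mathrm{i})\Rightarrow\neg(\mathrm{ii})$ direction, genuinely different from the paper's: the paper does not extract the tangent directly from a sequence $y_m$ with $g(y_m)>mf(y_m)$, but instead packages $f,g$ into a $\Zed$-map $\eta=(f\restrict_X,g\restrict_X)\colon X\to[0,1]^2$, invokes the two-dimensional theorem of Busaniche--Mundici to get a rationally outgoing $1$-tangent of $\eta(X)$, and then pulls it back to a rationally outgoing $k$-tangent of $X$ (Lemma~\ref{lem:ReverseTang}). Your direct route would, if completed, make the proof self-contained; but as written it has two genuine gaps. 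First, in \emph{both} directions you repeatedly use that the linear form $\ell_f$ of $f$ on the simplex $T$ capturing the tail of your sequence vanishes on $\mathrm{span}(u_1,\dots,u_k)$ (``$\ell_f\equiv 0$ on $\aff(S)-x$''). But $\ell_f$ is the linear piece of $f$ on $T$, not on $S$; these pieces agree only on $T\cap S$, and the $y_m$ you need are precisely those \emph{outside} $S$ (points of $X$ inside $S$ lie in $F=Zg$ and are useless for the ratio). To conclude $\ell_f(u_j)=0$ you must first show that $T$ itself contains an $(x,u)$-simplex $C_{x,u,\lambda'}$ (on which $f$ vanishes and where $f=\ell_f(\cdot-x)$). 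That statement is exactly the content of the paper's Lemma~\ref{lem:ZkTangIntersec}, whose inductive proof (facet-by-facet half-space argument) occupies most of Section~3; you neither prove it nor cite it at the point where you need it.

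Second, and more seriously, in the $\neg(\mathrm{i})\Rightarrow\neg(\mathrm{ii})$ direction the production of the rational certificate $(S,F,\lambda)$ with $S\cap X=F\cap X$ and $C_{x,u,\lambda}\not\subseteq F$ is the crux of the whole theorem, and you explicitly defer it (``the delicate polyhedral heart \dots\ the step I expect to consume most of the technical effort''). Your proposed $F$ (``the rational face of $S$ associated with the lower-level directions $u_1,\dots,u_{k-1}$'') is not well defined: the $u_j$ are limits of normalized difference vectors and are in general irrational, while faces of a rational simplex are rational. A workable choice is $S$ a rational simplex contained in $Zf\cap T$ and containing an $(x,u)$-simplex, with $F=Zg\cap S$; then $S\cap X\subseteq Zf\cap X\subseteq Zg\cap X$ gives $S\cap X=F\cap X$ almost for free, and $\ell_g(u_k)\neq 0$ gives $C_{x,u,\lambda}\not\subseteq F$. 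But the existence of such an $S$ --- a single rational simplex of a triangulation of the rational polyhedron $Zf\cap T$ that swallows an entire $(x,u)$-simplex --- is again a nontrivial claim of the same nature as Lemma~\ref{lem:ZkTangIntersec}, and you have not established it. Until these two points are supplied, the argument does not close; the paper's detour through $\eta$ and the planar case is precisely how it discharges this burden.
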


Each direction of the equivalence 
in Theorem~\ref{theorem:aereo} depends
 on a key property of rationally outgoing  $k$-tangents.
Accordingly,   the proof
is divided  in two parts,  each of them proved in a separate section.

 \section{Proof of Theorem \ref{theorem:aereo}: 
 ({i})$\,\Rightarrow$({ii})}   
 
\begin{lemma}\label{lem:ZkTangIntersec}
Let $P\subseteq \Rn$ be a polyhedron,  
 $X\subseteq P$ a closed set,
and  ${u=(u_1,\ldots,u_k)}$  a $k$-tangent of $X$    
at   $x.$
 Then $P$ contains an $(x,u)$-simplex.
\end{lemma}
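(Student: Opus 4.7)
The plan is to reduce to a single convex simplex $T$ from a simplicial decomposition of $P$ and then, inside the tangent cone of $T$ at $x$, choose the scalars $\lambda_1,\dots,\lambda_k$ inductively so that the facet inequalities cutting out $T$ near $x$ are respected.

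First, decompose $P=T_1\cup\cdots\cup T_r$ into simplexes and, by pigeonhole on the infinite sequence $x_1,x_2,\dots$, pass to a subsequence contained in a single closed simplex $T$. Closedness of $T$ gives $x\in T$, and any subsequence of $x_1,x_2,\dots$ still determines $u$ (cf.\ the Remarks), so it suffices to place an $(x,u)$-simplex inside $T$. Since $T$ is a convex polytope, its tangent cone at $x$ can be written as $\mathcal{T}=\{y\in\Rn : a_j\cdot y\leq 0,\ j=1,\dots,m\}$ for outward facet normals $a_j$, and there is $\varepsilon>0$ such that $x+y\in T$ whenever $y\in\mathcal{T}$ with $\|y\|<\varepsilon$; in particular $v_i:=x_i-x\in\mathcal{T}$ for all sufficiently large $i$. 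Writing $v_i=\sum_{l=1}^k\alpha_i^l u_l+\varepsilon_i$ with $\alpha_i^l=\langle v_i,u_l\rangle$ and $\varepsilon_i\perp\mathrm{span}(u_1,\dots,u_k)$, Definition~\ref{definition:k-tangent} forces $\alpha_i^l>0$ for large $i$ and yields the nested asymptotics $\alpha_i^{l+1}/\alpha_i^l\to 0$ and $\|\varepsilon_i\|/\alpha_i^l\to 0$ for every $l\in\{1,\dots,k\}$.

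The key step is the inductive claim: for each $l$, if $a_j\cdot u_s=0$ for every $s<l$, then $a_j\cdot u_l\leq 0$. Under that hypothesis, $a_j\cdot v_i=\sum_{l'\geq l}\alpha_i^{l'}(a_j\cdot u_{l'})+a_j\cdot\varepsilon_i\leq 0$, and dividing by $\alpha_i^l>0$ and letting $i\to\infty$ eliminates every term but $a_j\cdot u_l$. Armed with these inequalities I would then choose $\lambda_1\gg\lambda_2\gg\cdots\gg\lambda_k>0$ decreasing rapidly enough (relative to the finitely many numbers $|a_j\cdot u_{l'}|$) that, for each facet normal $a_j$ with $s_0:=\max\{s\leq k:a_j\cdot u_{s'}=0\text{ for all }s'\leq s\}$ and each $l$, the strictly negative term $\lambda_{s_0+1}(a_j\cdot u_{s_0+1})$ dominates the bounded later contributions, giving $a_j\cdot\sum_{l'\leq l}\lambda_{l'}u_{l'}\leq 0$. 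A final common rescaling of the $\lambda_l$ then pushes each vertex $x+\sum_{l'\leq l}\lambda_{l'}u_{l'}$ into $B(\varepsilon,x)$, hence into $T$, and convexity of $T$ yields $C_{x,u,\lambda}\subseteq T\subseteq P$.

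The main obstacle is precisely the inductive claim together with the subsequent calibration of the $\lambda_l$: everything rests on the normalisations in Definition~\ref{definition:k-tangent} producing exactly the hierarchy $\alpha_i^1\gg\alpha_i^2\gg\cdots\gg\|\varepsilon_i\|$ needed to isolate $a_j\cdot u_l$ in the limit, and on stratifying the facets by "level" $s_0$ so that a single geometric decrease of the $\lambda_l$'s simultaneously handles every $j$ and every $l$.
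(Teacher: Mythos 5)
Your proof is correct, and it takes a genuinely different route from the paper's. Both arguments begin identically: triangulate $P$, pigeonhole the determining sequence into a single simplex $T\ni x$, and note that the subsequence still determines $u$. From there the paper proceeds by a purely face-theoretic induction: it first shows $x+\mathbb{R}u_1+\cdots+\mathbb{R}u_k\subseteq\aff T$, then constructs $\lambda_1,\dots,\lambda_k$ one at a time, maintaining the invariant that any face of $T$ containing the current endpoint $z_l$ must contain all of $C_{x,u(l),\lambda(l)}$, and at each step uses the half-spaces bounded by the affine hulls of the facets through $F_l$ to show the ray $z_l+\mathbb{R}_{\geq 0}u_{l+1}$ re-enters $T$. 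You instead work with the defining inequalities of the tangent cone of $T$ at $x$ and extract the quantitative hierarchy $\alpha_i^1\gg\alpha_i^2\gg\cdots\gg\|\varepsilon_i\|$ from Definition~\ref{definition:k-tangent}; your key claim (if $a_j\cdot u_s=0$ for all $s<l$ then $a_j\cdot u_l\leq 0$, obtained by dividing $a_j\cdot v_i\leq 0$ by $\alpha_i^l$ and passing to the limit) is a clean dual reformulation of what the paper encodes in its face-containment invariant, and it lets you calibrate all the $\lambda_l$ simultaneously by a single geometric decay rather than sequentially. Your approach is more computational but arguably more transparent about \emph{why} the directions $u_l$ point into $T$; the paper's buys the stronger face-containment property (ii), which it reuses later in the proof of Theorem~\ref{theorem:aereo}, whereas your version proves exactly the stated lemma. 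One small point to make explicit: when $T$ is not full-dimensional, the cone $\{y\mid a_j\cdot y\leq 0\}$ cut out by facet normals alone is larger than the tangent cone, so you must include the equality constraints defining $\aff T$ among your inequalities (as opposite pairs $\pm a\cdot y\leq 0$); your limiting argument applied to such a pair then yields $a\cdot u_l=0$ for every $l$, which is precisely the paper's preliminary step $x+\mathbb{R}u_1+\cdots+\mathbb{R}u_k\subseteq\aff T$, and the rest of your calibration goes through unchanged.
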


\begin{proof}
Let $y_1,y_2,\ldots$
be a  sequence of elements in $X$ determining the tangent $u$.
Let $\Delta$ be a triangulation of $P$,
 and $S$ a simplex of $P$ such that $\{i\mid y_i\in S\}$ is infinite. 
Since $S$ is closed, $x\in S$.
Let $x_1,x_2,\ldots $ be  the subsequence of $y_1,y_2,\ldots$ whose elements lie in $S$. Then $x_1,x_2,\ldots $
determines the $k$-tangent 
$u=(u_1,\ldots,u_k)$ of 
$X\cap S$ at $x$.

 We will first prove 
\begin{equation}\label{Eq:Aff}
x+\mathbb{R} u_1+\cdots+\mathbb{R} u_k\subseteq\aff S.
\end{equation}
For all  $i$  we have
$$x+\mathbb{R} x_i^1=x+\mathbb{R}\frac{x-x_i}{||x-x_i||}=x+\mathbb{R}(x-x_i)\subseteq\aff S.$$
Since  $\aff S$ is closed then
$x+\mathbb{R} u_1\subseteq\aff S$. 
Suppose  we have proved $$x+\mathbb{R} u_1+\cdots+\mathbb{R} u_{l-1}\subseteq\aff S.$$ 
Let $y\in x+\mathbb{R} u_1+\cdots+\mathbb{R} u_{l-1}$. Then 
\begin{align*}
y+\mathbb{R} x_i^{l}&=y+\mathbb{R}
\frac{x-x_i-\mathsf{p}_{l-1}(x-x_i)}{||x-x_i-\mathsf{p}_{l-1}(x-x_i)||}\\[0.05cm]
&=y+\mathbb{R}(x-x_i-\mathsf{p}_{l-1}(x-x_i))\\[0.12cm]
&\subseteq y+\mathbb{R}(x-x_i)+\mathbb{R} u_1+\cdots+\mathbb{R} u_{l-1}\subseteq\aff S.
\end{align*}
Again,   $\,\,\,y+\mathbb{R} u_l\subseteq \aff S\,\,\,$ and 
$\,\,\,
x+\mathbb{R} u_1+\cdots+\mathbb{R} u_{l}\subseteq\aff S.
$
This concludes the proof of (\ref{Eq:Aff}).
\smallskip

We shall now find $\lambda=(\lambda_1,\ldots,\lambda_k)\in\mathbb{R}^k_{>0}$ such that $C_{x,u,\lambda}\subseteq S$.
To this purpose
 we will prove the following stronger statement:
\smallskip

\noindent{\it Claim:} 
For each $l\leq k$, there exists $\lambda(l)=(\lambda_1,\ldots,\lambda_l)\in\mathbb{R}^l_{>0}$ such that 
\begin{itemize}
\item[(i)]$C_{x,u(l),\lambda(l)}\subseteq S$, and 
\item[(ii)] if $F$ is a face of $S$ such that $z_l=x+\lambda_1u_1+\cdots+\lambda_lu_l\in F$, then ${C_{x,u(l),\lambda(l)}\subseteq F.}$
\end{itemize}

The proof  is by induction on $l=1,\ldots,k-1$.
\smallskip

{\it Basis Step $(l=1)$:}   

In case  $\,\,x\in\relint S$
 there exists $\epsilon>0$ such that ${B(\epsilon,x)\cap \aff S\subseteq \relint S}$. Then setting $\lambda_1=\epsilon/2$, by { (\ref{Eq:Aff})} we obtain
 $C_{x,u(1),\lambda_1}\subseteq B(\epsilon,x)\cap \aff S \subseteq\relint S,$
from which both (i) and (ii) immediately follow.

In case   $\,\,x$ does not lie in $\relint S$,
 let $H$ be an arbitrary facet of $S$ 
containing  $x$ as an element. 
 Let  $\aff H^+$ be the half-space of $\aff S$ with boundary 
$\aff H$ and containing $S.$  
For each $\rho>0$ we have the inclusion
$x+\rho(x_i-x)\in \aff H^+$. Since   $\aff H^+$ is closed, then $x+\mathbb{R}_{\geq} u_1\subseteq  \aff H^+$.
As a consequence,
\begin{equation}\label{Eq:Hyper}
x+\mathbb{R}_{\geq0} u_1\subseteq\bigcap\{\,\aff G^+\mid  G \mbox{ a facet of $S$
 containing $x$}\}.
\end{equation}

For some  $\epsilon_1>0$ 
the points  $x+\epsilon_1 u_1$ must lie in $ S$. 
(For otherwise some facet  $K$ of $S$  has the property
that  for each $\epsilon>0$, $x+\epsilon u_1\in  \aff S\setminus\aff K^+$, where $\aff K^+$ is the half-space of $\aff S$ with boundary 
$\aff K$ and containing $S$. 
From $x\in S\subseteq \aff H^+$ we get  $x\in S\cap \aff K=K$ contradicting { \eqref{Eq:Hyper}.})
 
Now let $\lambda_1=\epsilon_1/2$. Then, $C_{x,\lambda_1,u_1}\subseteq \conv(x,x+\epsilon_1u_1)\subseteq S$, 
and (i) is settled. 
Let $F$ be a face of $S$ 
containing  $x+\lambda_1u_1$. 
Since $x+\lambda_1u_1$ lies in the relative interior of $\conv(x,x+\epsilon_1u_1)\subseteq S$, { by Lemma~\ref{Lem:DesInc}}\ \ ${C_{x,u_1,\lambda_1}\subseteq\conv(x,x+\epsilon_1u_1)\subseteq F}$. This proves (ii), and concludes the proof of the basis step.
\smallskip

{\it Inductive Step:} 

Assume that our claim holds for $l<k$. 
Then there exists 
$\lambda(l)=(\lambda_1,\ldots,\lambda_l)$ in $\mathbb{R}^l_{>0}$ such that 
$C_{x,u(l),\lambda(l)}\subseteq S$ and if $F$ is a face of $S$ containing  $x+\lambda_1u_1+\cdots+\lambda_lu_l$, then $C_{x,u(l),\lambda(l)}\subseteq F$.

For the rest of the proof let $z_l=x+\lambda_1u_1+\cdots+\lambda_lu_l$ and $F_l$ be the face of $S$ such that $z_l\in\relint F_l$. 

As in the basis step, in case
$F_l=S$, there exists $\epsilon\in \mathbb{R}_{>0}$ such that $$B(\epsilon,z_l)\cap \aff S\subseteq \relint S.$$ Setting $\lambda_{l+1}=\epsilon/2$ and $\lambda(l+1)=(\lambda(l),\lambda_{l+1})$, { by (\ref{Eq:Aff}) }
we get
$$z_{l}+\lambda_{l+1}u_{l+1}\in \aff S\cap B(\epsilon,z_l)\subseteq \relint S.$$ Since $S$ is a simplex and 
$C_{x,u(l),\lambda(l)}\subseteq S$,  then
 $$C_{x,u(l+1),\lambda(l+1)}=\conv(C_{x,u(l),\lambda(l)}\cup \{z_{l}+\lambda_{l+1}u_{l+1}\})\subseteq S,$$ which proves (i) and (ii).

In case  $F_l$ is a proper face of $S$,
 let $H$ be an arbitrary facet of $S$ 
containing  $F_l$.
 Let  $\aff H^+$ be the closed half-space of $\aff S$ with boundary 
$\aff H$  containing $S.$  
From  { \eqref{Eq:Aff},} we obtain
\begin{equation}\label{Eq:Hyper2}
z_l+\mathbb{R}_{>0} u_{l+1}\subseteq\bigcap\{\,\aff G^+\mid  G \mbox{ a facet of $S$
 containing $F_l$}\}.
\end{equation}
Therefore, 
  $z_l+\mathbb{R}_{\geq0}u_{l+1}\cap S\neq\{ z_l\} $.
For otherwise, arguing by way of contradiction,
 there is a facet  $K$ of $S$ such that  ${z_l+\epsilon u_{l+1}\in  \aff S\setminus\aff K^+}$  for each $\epsilon>0$.  Since $z_l\in \aff K^+$ then $z_l\in K$. 
Since $F_l$ is the smallest face of $S$ containing $z_l$,  it follows that  $F_l\subseteq K$, contradicting {  (\ref{Eq:Hyper2}).} 

We have just proved that there exists 
$\epsilon_{l+1}>0$ such that $z_l+\epsilon_{l+1}u_{l+1}\in S$. Letting
 $\lambda_{l+1}=\epsilon/2$ we have
  $$C_{x,u(l+1),\lambda(l+1)}=\conv(C_{x,u(l),\lambda(l)}\cup\{z_{l}+\lambda_{l+1}u_{l+1}\})\subseteq S,$$
which settles (i).
 
 For any face $F$  of $S$ such that $z_l+\lambda_{l+1} u_1\in F$,  { by Lemma~\ref{Lem:DesInc}} we have  $z_l\in\conv(z_l, z_l+\epsilon_{l+1}u_{l+1})\subseteq F$. Therefore,
  $F_l\subseteq F$, and by inductive hypothesis $C_{x,u(l),\lambda(l)}\subseteq F$, whence
   $$C_{x,u(l+1),\lambda(l+1)}=\conv(C_{x,u(l),\lambda(l)}\cup\{z_{l}+\lambda_{l+1}u_{l+1}\})\subseteq F.$$
Having thus proved (ii), the claim is settled and the lemma is proved.
\end{proof}

\bigskip

 \noindent{\it Proof of Theorem~\ref{theorem:aereo}}: ({i})$\Rightarrow$({ii}).
Let $u=(u_1,\ldots,u_k)$ be a rationally outgoing $k$-tangent of $X$ at $x$, with
the intent of proving that $\McN(X)$ is not strongly semisimple. 
{ With reference to Definition
\ref{definition:k-tangent},} let
 $S$ be a rational $k$-simplex together with
 a proper face $F\subseteq S$ and reals
 $\lambda=(\lambda_1,\ldots,\lambda_k)\in\mathbb{R}^k_{>0}$
 such that $C_{x,u,\lambda}\subseteq S$,
 $S\cap X =  F\cap X$,  and 
$C_{x,u,\lambda}\not\subseteq F.$

{ By Lemma~\ref{lem:ExtTriang}} there exists  a regular triangulation  
$\Delta$ of $\cube$ such that $S=\bigcup\{T\in\Delta\mid T\subseteq S\}$ 
and $F=\bigcup\{R\in\Delta\mid R\subseteq F\}$. 
Let $f,g\in\McNn$ be the uniquely determined maps which are
 (affine)  linear over each simplex of 
$\Delta$, and for each vertex $v$ (of a simplex) in
$\Delta$ satisfy the conditions
$$
f(v)=\begin{cases}
0&\mbox{if }v\in F;\\
1&\mbox{otherwise};
\end{cases}
\quad\quad\quad
g(v)=\begin{cases}
0&\mbox{if }v\in S;\\
1&\mbox{otherwise}.
\end{cases}
$$
The existence of $f$ and $g$ follows from { Lemma~\ref{Lem:ZedMapExten}.} Observe that
  $Zg=S$ and $Zf=F$. Then:
\begin{equation}
\label{equation:crux}
Zf\restrict_{X}=X\cap Zf=X\cap F=X\cap  S=X\cap Zg=Zg\restrict_X.
\end{equation}

This proves that $f\restrict_{X}$ belongs to a maximal ideal of $\McN(X)$ iff  $g\restrict_{X}$ does. To complete the proof of ({i})$\Rightarrow$({ii}) it suffices to
settle the following

\medskip
\noindent
{\it Claim.}
 $f\restrict_{X}$ does not belong to the ideal $\langle g\restrict_X\rangle$
generated by $g\restrict_X$.

As a matter of fact, arguing by way of contradiction and letting
the integer
 $m>0$ satisfy $f\restrict _X\leq mg\restrict _X$,
 it follows that  $X$ is contained in 
 the rational polyhedron $P=\{y\in\cube\mid f(y) \leq mg(y)\}$. 
An application of
{ Lemma~\ref{lem:ZkTangIntersec}}
yields  $\lambda'=(\lambda'_1,\ldots,\lambda'_k)\in\mathbb{R}^k_{>0}$ such that  the simplex
$C_{x,u,\lambda'}$
is contained in $P$. { By Lemma~\ref{Lem:CapUsimp}} there exists
 $\epsilon=(\epsilon_1,\ldots,\epsilon_k)\in\mathbb{R}^k_{<0}$
such that $C_{x,u,\epsilon}\subseteq C_{x,u,\lambda}\cap C_{x,u,\lambda'}$.

Since $g$, as well as $mg,$  vanish over $S$, then $f$
vanishes over $C_{x,u,\epsilon}$. 
Therefore, 
${C_{x,u,\epsilon}\subseteq Zf=F}$. 
From this we obtain ${\emptyset\neq\relint C_{x,u,\epsilon}\cap F\subseteq \relint C_{x,u,\lambda} \cap F}$.
 Since $F$ is a face of $S$,  { by Lemma~\ref{Lem:DesInc}} 
$C_{x,u,\lambda}\subseteq F$, which contradicts  our assumption
$C_{x,u,\lambda}\not\subseteq F$.

This completes the proof of the claim, as well
as of the
({i})$\Rightarrow$({ii}) direction of Theorem~\ref{theorem:aereo}.  
  \qed
 
 \section{Proof of Theorem \ref{theorem:aereo}
  ({ii})$\Rightarrow$({i})}

\begin{lemma}\label{lem:ReverseTang}
Let $P\subseteq \mathbb R^n$ be a rational polyhedron and
$X\subseteq P$ a closed set. 
If $\eta\colon P\to \mathbb{R}^2$ is a $\mathbb{Z}$-map such that 
 $\eta(X)$ has a rationally outgoing $1$-tangent, then for some $k\in \{1,\ldots,n-1\},$\,\,\,
$X$ has a rationally outgoing $k$-tangent.
\end{lemma}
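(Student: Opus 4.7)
The plan is to lift the outgoing $1$-tangent data from $\eta(X)$ back across $\eta$ to $X$, producing a $k$-tangent of $X$, and then to exhibit rational outgoingness through the rational preimages $\eta^{-1}(T)$ and $\eta^{-1}(G)$.

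First I would unpack the hypothesis: fix $y\in\eta(X)$, a unit vector $v\in\mathbb{R}^2$, a scalar $\mu>0$, a rational simplex $T\subseteq\mathbb{R}^2$ with proper face $G\subsetneq T$ satisfying $C_{y,v,\mu}\subseteq T$, $C_{y,v,\mu}\not\subseteq G$ and $T\cap\eta(X)=G\cap\eta(X)$, together with a sequence $y_1,y_2,\ldots\in\eta(X)$ determining $v$. Pick $x_i\in X$ with $\eta(x_i)=y_i$; since $X$ is compact (which holds in the intended application where $P\subseteq\cube$), extract a subsequence with $x_i\to x'\in X$, so that $\eta(x')=y$ by continuity. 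Iterating the Gram--Schmidt-style construction in Definition~\ref{definition:k-tangent} through further subsequence extractions, one obtains a $k$-tangent $u=(u_1,\ldots,u_k)$ of $X$ at $x'$, where $k\leq n-1$ is taken maximal so that $x_i-x'\notin\mathbb{R}u_1+\cdots+\mathbb{R}u_k$ along the retained subsequence.

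The heart of the argument is to produce rational outgoingness witnesses for $u$. By Lemma~\ref{lem:PreImag}, the sets $Q:=\eta^{-1}(T)$ and $Q':=\eta^{-1}(G)$ are rational polyhedra contained in $P$, and the hypothesis gives $Q\cap X=Q'\cap X$. Combining Lemmas~\ref{lem:ExtTriang} and \ref{lem:TriangReg}, fix a regular triangulation $\Delta$ of $P$ on which $\eta$ is affine linear on every simplex and where both $Q$ and $Q'$ are subcomplexes. A further subsequence extraction places all $x_i$ in a single simplex $S_\star\in\Delta$, and the argument in the proof of Lemma~\ref{lem:ZkTangIntersec} supplies $\lambda\in\mathbb{R}^k_{>0}$ with $C_{x',u,\lambda}\subseteq S_\star$. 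Writing $L$ for the integer linear part of $\eta|_{S_\star}$, the equality $y_i-y=L(x_i-x')$ together with $(y_i-y)/\|y_i-y\|\to v$ identifies $v=L(u_{j^*})/\|L(u_{j^*})\|$, where $j^*$ is the least index with $L(u_{j^*})\neq 0$ (such $j^*$ must exist, for otherwise the direction of $L(x_i-x')$ could not converge). Consequently, for $\lambda$ sufficiently small, $\eta(C_{x',u,\lambda})\subseteq C_{y,v,\mu}\subseteq T$, so $C_{x',u,\lambda}\subseteq Q$.

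Next, refine $\Delta$ once more via Lemma~\ref{lem:ExtTriang} applied to the pair $S_\star\cap Q'\subseteq S_\star$, so that the refined triangulation contains a simplex $S$ with $\relint C_{x',u,\lambda}\subseteq\relint S$ and $S\cap Q'$ equal to a single face $F$ of $S$. Then $S\subseteq Q$ by the previous paragraph, and $S\cap X\subseteq Q\cap X=Q'\cap X\subseteq S\cap Q'=F$, so $S\cap X=F\cap X$. The top vertex $x'+\lambda_1u_1+\cdots+\lambda_ku_k$ maps under $\eta$ to $y+\sum_{j\geq j^*}\lambda_j L(u_j)$, a point of $T\setminus G$ (since the $v$-component exits $G$ inside $C_{y,v,\mu}$ by the outgoingness of $v$), so $C_{x',u,\lambda}\not\subseteq F$. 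Rational outgoingness of $u$ is thereby established.

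The main obstacle is the third step: the na\"ive simplex $S_\star$ supplied by Lemma~\ref{lem:ZkTangIntersec} need not lie in $Q$, because the points $y_i$ may sit outside $T$ (they only approach $y$ along $v$); additionally $S_\star\cap Q'$ need not be a single face of $S_\star$. Both difficulties are resolved by the careful subdivision of $\Delta$ near $C_{x',u,\lambda}$ together with the linear-algebraic identification $v=L(u_{j^*})/\|L(u_{j^*})\|$, which forces $\eta$ to send $C_{x',u,\lambda}$ into $T$ for small enough $\lambda$ even when the $x_i$'s themselves lie outside $Q$.
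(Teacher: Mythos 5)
Your overall strategy --- lift the tangent by iterated subsequence extraction, pass to a simplex of a triangulation adapted to the rational preimages, and identify $v$ with the normalised image of the first surviving direction --- is the same as the paper's. But there is a genuine gap in where you stop the lifting construction. You take $k$ ``maximal so that $x_i-x'\notin\mathbb{R}u_1+\cdots+\mathbb{R}u_k$'', and only afterwards observe that $j^*$, the least index with $L(u_{j^*})\neq 0$, may be smaller than $k$. The paper instead \emph{stops the construction at $k=j^*$}: in its Step~2 the tuple $w=(w_1,\ldots,w_k)$ is built so that $Aw_j=0$ for all $j<k$ and $Aw_k\neq 0$. This is not cosmetic. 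If $j^*<k$, then for $j^*<j\leq k$ the vectors $L(u_j)$ are in general neither zero nor parallel to $v$, so
$$\eta\bigl(x'+\lambda_1u_1+\cdots+\lambda_ju_j\bigr)=y+\lambda_{j^*}L(u_{j^*})+\sum_{j^*<i\leq j}\lambda_iL(u_i)$$
does not lie on the segment $C_{y,v,\mu}=\conv(y,y+\mu v)$, no matter how small $\lambda$ is. Hence your key containment $\eta(C_{x',u,\lambda})\subseteq C_{y,v,\mu}$ fails, $S\subseteq Q$ is not established, and the final step (the top vertex landing in $T\setminus G$ ``since the $v$-component exits $G$'') collapses. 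Concretely: for $n=3$, $\eta$ the projection onto the first two coordinates, and a tangent with $u_1=e_1$, $u_2=e_2$, one has $j^*=1<k=2$ and $\eta(C_{x',u,\lambda})$ is a two-dimensional triangle; the correct output here is the rationally outgoing $1$-tangent $u_1$, not the $2$-tangent. Note also that $j^*$ depends on the linear part $L$ of $\eta$ on the ambient simplex $S_\star$, so the truncation can only be performed \emph{after} fixing $S_\star$; your order of construction (choose $k$ first, meet $S_\star$ and $L$ later) cannot be kept.

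The remaining discrepancies are minor and repairable: after refining $\Delta$ you must shrink $\lambda$ again (via Lemma~\ref{Lem:CapUsimp} or Lemma~\ref{lem:ZkTangIntersec}) so that $C_{x',u,\lambda}$ lies in a single simplex of the refinement; $S\cap Q'$ is a priori only a union of faces of $S$, and one needs convexity of $\eta^{-1}(G)\cap S$ (from linearity of $\eta$ on $S$) to see it is a single face; the inclusion you write as $Q'\cap X\subseteq S\cap Q'$ should be $S\cap Q'\cap X\subseteq S\cap Q'$; and showing the image of the top vertex avoids $G$ needs Lemma~\ref{Lem:DesInc} applied to $\relint C_{y,v,\mu}$. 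None of these is fatal, but the choice of $k$ is, and fixing it essentially forces you back onto the paper's Step~2.
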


\begin{proof}
Let $u\in\mathbb{R}^2$ be a rationally outgoing $1$-tangent of $\eta(X)$ at $x$.
Since $u$ is outgoing there exists $\epsilon>0$ 
  such that both vertices of the segment
  $\conv(x,x+\epsilon u)$ are rational,
   and
   \begin{equation} 
   \label{equation:dopotutto}
   \conv(x,x+\epsilon u)\cap \eta(X)=\{x\}.
   \end{equation} 
   By { Lemma~\ref{lem:PreImag}},  both $\eta^{-1}(\{x\})$ and $\eta^{-1}(\conv(x,x+\epsilon u))$ 
   are rational polyhedra contained in $P$. 
{ By Lemmas~\ref{lem:ExtTriang} and~\ref{lem:TriangReg},}
 there exists a regular triangulation $\Delta$ of $P$ such that $\eta$ is (affinely) linear on each simplex of $\Delta$ and
\begin{eqnarray}
\label{Eq:Deltax}
\eta^{-1}(\{x\})
&=&\bigcup\{R \in\Delta\mid R \subseteq\eta^{-1}(\{x\}) \},\\
\label{Eq:Deltaconv}\eta^{-1}(\conv(x,x+\epsilon u))
&=&\bigcup\{U\in\Delta\mid U\subseteq\eta^{-1}(\conv(x,x+\epsilon u)) \}.
\end{eqnarray}

The rest of the proof is framed in three steps. 
\smallskip

\noindent{\it Step 1:}
Let $x_1,x_2,\ldots$ be
 a sequence of elements of $\eta(X)$ determining the rationally outgoing $1$-tangent
 $u$ of $\eta(X)$ at $x$. 
There exists $T\in\Delta$ such that the set 
${\{i\mid x_i\in \eta( T\cap X)\}}$ is infinite.
The compactness of  $T$ yields
 a sequence $z_1,z_2,\ldots$ in $T$  such that $\eta(z_1),\eta(z_2),\ldots$ is a subsequence of $x_1,x_2,\ldots$ and  
  ${z=\lim_{i\to \infty }z_i}$  exists.
  Since $\eta$ is continuous and $X\cap T$ is closed, 
  we have $\eta(z)=x$ and $z\in X\cap T$.
Since $\eta$ is (affine) linear on $T$, there is 
 a $2\times n$  integer matrix  $A$ and a vector
 $b\in\Zed^2$ such that 
$$\eta(y)=Ay+b\mbox{  for each  }y\in T.$$

  \noindent{\it Step 2:} We claim that there exists $k\in \{1,\ldots,n\}$ together with  $k$
orthogonal unital vectors  $w_1,\ldots,w_k\in\mathbb R^n$ such that
\begin{itemize}
\item[(i)]$Aw_k\neq 0$, and 
\item[(ii)] $Aw_j=0$  for each $j<k$,  
\item[(iii)] $w=(w_1,\ldots,w_k)$ 
 is a $k$-tangent of $X$ at $z$ determined by a subsequence of $z_0,z_1,\ldots$.
\end{itemize}

  \medskip
The vectors  $w_1,\ldots,w_k$ are constructed by 
the following inductive procedure:   
\smallskip

{\it Basis Step:} 

From $\eta(z_i)-\eta(z)\notin \mathbb{R}u$ it follows that 
  $z_i\neq z$ for all $i$, whence  
each vector $z_i^{1}=(z_i-z)/||z_i-z||$  is  well defined. 
Without loss of generality we can 
assume that $z_0^{1},z_1^{1},\ldots$ tends to some unit vector $w_{1}$.
 (If not, using the compactness of  $(n-1)$-dimensional sphere of radius $1$ 
we can take a converging subsequence of $z_0^{1},z_1^{1},\ldots$,
and call $w_{1}$ its limit.) 
Observe that $w_1$ is a $1$-tangent 
of $X$ at $z$. 
If $Aw_1\neq 0$ then $w=w_1$ proves the claim.
Otherwise we proceed inductively.
\smallskip

{\it Inductive Step:} 

Suppose we have obtained  for some $l$, an $l$-tangent 
$w(l)=(w_1,\ldots,w_l)$
 of $X$ at $z$, 
and $Aw_i=0$ for each $i\in\{1,\ldots,l\}$.
Observe that $l<n$. (For otherwise, since  for each 
 $1\leq i< j\leq l$, the vectors $w_i$  and $w_j$ are pairwise orthogonal, then $A$ is the zero matrix, which is
 contradicts $Az+b=\eta(z)\neq \eta(z_i)=Az_i+b$.)
 Since $Aw_i=0$ for each $i\in\{1,\ldots,l\}$, then
$$A(z+\delta_1 w_1+\cdots+\delta_l w_l)+b=A(z)+b=\eta(z)\neq \eta(z_i)  \mbox{ for each $\delta_1,\ldots,\delta_l\in\mathbb{R}$.}
$$
It follows that   $z_i-z\notin \mathbb{R}w_1+\cdots +\mathbb{R}w_l$,
and  the vectors 
 $$z_i^{l+1}=\frac{z_i-z-\mathsf{p}_{l}(z_i-z)}{||z_i-z-\mathsf{p}_{l}(z_i-z)||}$$ are well defined.  
 Taking, if necessary, a subsequence of the $z_i$
 and denoting by  $z_j$   its $j$th element, 
 we can further assume that $\lim_{j\to\infty}z_j^{l+1}=w_{l+1}$  
for some vector $w_{l+1}$.

By construction, the unit vector $w_{l+1}$ is orthogonal to each $w_j$ 
with $j\leq l$,  
 and $w(l+1)=(w_1,\ldots,w_l,w_{l+1})$
  is an $(l+1)$-tangent of $X$ at $z$. 
If $Aw_{l+1}\neq0$ we fix $k=l+1$ and $w=w(l+1)$
 is a $k$-tangent satisfying the properties of the claim. 
If not,
 we proceed inductively. This proves the claim and completes Step~2.
\smallskip

\bigskip

\noindent{\it Step 3:}
Let $w=(w_1,\ldots,w_k)$ be the $k$-tangent of $X$ at $z$ obtained { in Step 2}. 
We will prove that $w$ is rationally outgoing.

Since
 $w$ is also a $k$-tangent of $X\cap T$ at $z$, by 
{ Lemma~\ref{lem:ZkTangIntersec},}
 there exists a $k$-tuple $\gamma=(\gamma_1,\ldots,\gamma_k)\in\mathbb{R}^k_{>0}$ 
 such that $C_{z,w,\gamma}\subseteq T$. 
Since   $Aw_j=0$ for each $j<k$ then
 $\eta(y)=\eta(z)=x$ for each $y\in C_{z,w(k-1),\gamma(k-1)}$. 
We can write
\begin{align*}
0\neq Aw_k&
=\lim_{i\to\infty}Az_i^k
=\lim_{i\to\infty}A\left(\frac{z_i-z-\mathsf{p}_{k-1}(z_i-z)}{||z_i-z-\mathsf{p}_{k-1}(z_i-z)||}\right)\\
	&=\lim_{i\to\infty}\frac{A(z_i)-A(z)}{||z_i-z-\mathsf{p}_{k-1}(z_i-z)||}\\
&=\lim_{i\to\infty}\frac{\eta(z_i)-\eta(z)}{||z_i-z-\mathsf{p}_{k-1}(z_i-z)||}
\cdot \frac{||\eta(z_i)-\eta(z)||}{||\eta(z_i)-\eta(z)||}\\
&=\lim_{i\to\infty}\frac{\eta(z_i)-\eta(z)}{||\eta(z_i)-\eta(z)||}\cdot
\frac{||\eta(z_i)-\eta(z)||}{||z_i-z-\mathsf{p}_{k-1}(z_i-z)||}\,\,.
\end{align*}
Since 
  $0\neq u=\lim_{i\to\infty}(\eta(z_i)-\eta(z))/||\eta(z_i)-\eta(z)||$,
   then for some $c>0$ we can write 
  $$c=\lim_{i\to \infty}
 \frac{||\eta(z_i)-\eta(z)||}{||z_i-z-\mathsf{p}_{k-1}(z_i-z)||}
 \,\,\,\mbox{ and  }\,\,Aw_k=cu.$$

\noindent
Now, let us
    set $\lambda_j=\gamma_j$ for $j< k$,
 and $\lambda_k=\min\{\gamma_k,\epsilon/c\}$. Then ${C_{z,w,\lambda}\subseteq C_{z,w,\gamma} \subseteq T}$.
 For any $y\in C_{z,w,\lambda}$
there is  $0\leq\delta\leq\lambda_k\leq \epsilon/c$ with
\begin{equation}\label{Eq:CalcEta}
\eta(y)=A(z)+\delta A(w_k)+b=\eta(z)+\delta c u=x+\delta c u.
\end{equation}
It follows that  
\begin{equation}\label{Eq:EtaS}
\eta(C_{z,w,\lambda})\subseteq\conv(x,x+\epsilon u).
\end{equation}

To conclude the proof, let
 $S$ be the smallest face of $T$ such that 
 $C_{z,w,\lambda}$ is contained in  $S$.
{ By (\ref{Eq:Deltaconv}) and (\ref{Eq:EtaS})}, $\,\,\,{S\subseteq \eta^{-1}(\conv(x,x+\epsilon u))}$. 
By  { (\ref{Eq:Deltax})}, 
$\,\,S\cap\eta^{-1}(\{x\})$ is a union of faces of $S$. 
The linearity of  $\eta$ on $S$
ensures that  $S\cap\eta^{-1}(\{x\})$ is convex, whence a face of $S$.

Letting $F=S\cap\eta^{-1}(\{x\})$,
{  from (\ref{equation:dopotutto})}, it follows that $S\cap X=F\cap X$.  
{ Moreover by (\ref{Eq:CalcEta})}, $\eta(z+\lambda_1w_1+\cdots+\lambda_kw_k)=\eta(z)+\lambda_k c u\neq x$. Then $C_{z,w,\lambda}\not\subseteq F$.
We have shown 
 that the $k$-tangent
$w=(w_1,\ldots, w_k)$ is rationally outgoing.
This  concludes Step 3 and completes the proof
of the lemma.
 \end{proof}

\noindent{\it Proof of Theorem~\ref{theorem:aereo}}: ({ii})$\Rightarrow$({i}).
By way of contradiction, let $f,g\in\McN(\cube)$ be such that 
$f\restrict_{X}$ 
does not belong to
 the ideal generated by $g\restrict_X$ and that $f\restrict_{X}$ belongs to a maximal ideal of $\McN(X)$ iff $g\restrict_{X}$  does. Let $A$ be the subalgebra of $\McN(X)$ 
generated by  $f\restrict_{X}$ and 
$g\restrict_{X}$. By \cite[4.1]{BuMu201X},
 $A$ is not strongly semisimple.
Let the map $\eta\colon X\to [0,1]^2$ be defined by $\eta=(f\restrict_{X},g\restrict_{X})$.
By   \cite[3.6]{Mu2011}, $A\cong \McN(\eta(X)),$ whence
$\McN(\eta(X))$ is not strongly semisimple.
By { Theorem~\ref{Thm:BusMun}}, 
$\eta(X)$  has a rationally outgoing \mbox{$1$-tangent}. 
 By 
 {  Lemma~\ref{lem:ReverseTang},} for some ${k\in\{1,\ldots,n-1\}}$
  $X$ has a rationally outgoing $k$-tangent. \qed
  
  \subsubsection*{Acknowledgment:} I would like to thank the valuable comments and suggestions made by Daniele Mundici on previous drafts of this paper.

\end{document}